\documentclass[11pt,letterpaper, reqno]{amsart}

\usepackage[american]{babel} 
\usepackage[utf8]{inputenc} 


\usepackage{xcolor}
\usepackage[shortlabels]{enumitem}
\usepackage{hyperref}
\hypersetup{colorlinks,linkcolor=.,citecolor=.,urlcolor=.}

\usepackage[T1]{fontenc}
\usepackage[sc]{mathpazo}
\usepackage{tgpagella}

\usepackage{amsmath}
\usepackage{amssymb}
\usepackage{amsthm}

\newtheorem{thm}{Theorem}[]
\newtheorem{lem}[thm]{Lemma}
\theoremstyle{definition}

\newcommand{\thistheoremname}{}
\newtheorem*{genericthm*}{\thistheoremname}
\newenvironment{namedtheorem}[1]
{\renewcommand{\thistheoremname}{#1}%
\begin{genericthm*}}
{\end{genericthm*}}

\newcommand{\QQ}{\mathbb{Q}}
\newcommand{\ZZ}{\mathbb{Z}}

\newcommand{\F}{\mathcal{F}}
\renewcommand{\S}{\mathcal{S}}

\newcommand{\norm}[1]{\left\lVert#1\right\rVert}
\newcommand{\abs}[1]{\lvert#1\rvert}

\newcommand{\qandq}{\quad\text{and}\quad}

\title{Existence of small ordered orthogonal arrays}

\author{Kai-Uwe Schmidt and Charlene Wei\ss}
\address{Department of Mathematics, Paderborn University, Warburger Str.\ 100, 33098 Paderborn, Germany.}
\email[K.-U. Schmidt]{kus@math.upb.de}
\email[C. Wei\ss]{chweiss@math.upb.de}

\date{03 September 2021 (revised 26 February 2023)}


\begin{document}

\begin{abstract}
We show that there exist ordered orthogonal arrays, whose sizes deviate from the Rao bound by a factor that is polynomial in the parameters of the ordered orthogonal array. The proof is nonconstructive and based on a probabilistic method due to Kuperberg, Lovett and Peled.\\[-7ex]
\end{abstract}
\maketitle


\section{Introduction}

A $t$-$(q,n,\lambda)$ \emph{orthogonal array} is an $M\times n$ array on $q$ symbols such that every $M\times t$ subarray contains each $t$-tuple on $q$ symbols exactly $\lambda$ times as a row. The parameter $t$ is called the \emph{strength} of the orthogonal array. These combinatorial objects were introduced in the 1940s and now have various applications, for example in statistics, coding theory, cryptography, and software testing. We refer to~\cite{SloaneOA} for background on orthogonal arrays and their applications. The complete set of $n$-tuples on~$q$ symbols is a $t$-$(q,n,\lambda)$ orthogonal array for every strength $t$. Therefore one is interested in the existence of orthogonal arrays with a fixed strength~$t$ having as few rows as possible. 
\par
Ordered orthogonal arrays generalise orthogonal arrays and were independently introduced by Lawrence~\cite{LawrenceOOA} and Mullen and Schmid~\cite{MullenSchmidOOA} in 1996. A \emph{$t$-$(q,n,r,\lambda)$ ordered orthogonal array} is an $M\times nr$ array on $q$ symbols, where the $nr$ columns are divided into $n$ blocks containing $r$ ordered columns such that for every $n$-tuple $(t_1,t_2,\dots,t_n)$ of integers summing up to $t$ with $0\le t_i\le r$, the rows of the $M\times t$ subarray consisting of the first $t_1$ columns of the first block, the first $t_2$ columns of the second block and so on, contain every $t$-tuple exactly~$\lambda$ times as a row. Note that $M=\lambda q^t$. We often say that $M$ is the \emph{size} of the array. An example for a $2$-$(2,2,2,1)$ ordered orthogonal array is
 \begin{align*}
 \begin{array}{cc|cc}
 0 & 0 & 0 & 0\\
 0 & 1 & 1 & 1\\
 1 & 0 & 1 & 0\\
 1 & 1 & 0 & 1
 \end{array}.
 \end{align*}
Observe that this is not an orthogonal array of strength $2$ since in the subarray consisting of the second and fourth column, the tuples $01$ and $10$ do not occur as rows.
\par
Again, ordered orthogonal arrays have numerous applications, in particular in coding theory and cryptography. Most notably, ordered orthogonal arrays are closely related to $(t,m,s)$-nets, which are of great significance in numerical integration, in the sense that a $(t,m,s)$-net in base $q$ exists if and only if an $(m-t)$-$(q,s,m-t,q^t)$ ordered orthogonal array exists~\cite{LawrenceOOA},~\cite{MullenSchmidOOA}.
\par
Similarly to orthogonal arrays, one is interested in having as few rows as possible. Apart from using $(t,m,s)$-nets (which produce $t$-$(q,n,t,\lambda)$ ordered orthogonal arrays), only a few constructions for ordered orthogonal arrays are known, see \cite{RosenbloomTsfasman}, \cite{Skriganov}, \cite{Castoldi}, \cite{Panario}, for example. These constructions produce MDS-like codes, namely optimal $t$-$(q,n,r,1)$ ordered orthogonal arrays of size $q^t$ in the case that $q$ is a prime power satisfying $q\ge n-1$. For $r=1$, they are MDS codes, hence optimal $t$-$(q,n,1)$ orthogonal arrays.
\par
Let $N(q,n,t)$ be the minimum number $M$ such that a $t$-$(q,n,\lambda)$ orthogonal array of size $M$ exists for some $\lambda$. Define $N(q,n,r,t)$ accordingly for ordered orthogonal arrays. Every $t$-$(q,n,r,\lambda)$ ordered orthogonal array gives a $t$-$(q,n,\lambda)$ orthogonal array by only choosing the first column in every block of the ordered orthogonal array. On the other hand, every $t$-$(q,nr,\lambda)$ orthogonal array gives a $t$-$(q,n,r,\lambda)$ ordered orthogonal array by dividing the $nr$ columns into $r$ blocks each of size $n$. Hence we have
\begin{equation}
\label{eqn:inequalities_N}
N(q,n,t)\le N(q,n,r,t)\le N(q,nr,t).
\end{equation}
Our main result is that, roughly speaking, the lower bound is more accurate than the upper bound if $n$ is large compared to $t$. A famous lower bound for $N(q,n,t)$ is given by the Rao bound~\cite{Rao}, which implies 
\[
\left(\frac{cqn}{t}\right)^{t/2}\le N(q,n,t)\quad\text{and}\quad \left(\frac{cqnr}{t}\right)^{t/2}\le N(q,nr,t),
\]
where $c>0$ is a universal constant independent of all other parameters. This shows in particular that
\begin{equation}
\label{eqn:N_lower_bound}
N(q,n,r,t)\ge \left(\frac{cqn}{t}\right)^{t/2}.
\end{equation}
We now state our main result.
\begin{thm}\label{thm:mainresult}
For all integers $q,n,r,t$ satisfying $q\geq 2$ and $1\leq t\leq nr$, there exists a $t$-$(q,n,r,\lambda)$ ordered orthogonal array~$Y$ such that
\begin{equation}
\label{eqn:N_upper_bound}
|Y|\leq \left(\frac{cq(n+t)}{t}\right)^{ct}
\end{equation}
for some universal constant $c>0$ independent of all other parameters.
\end{thm}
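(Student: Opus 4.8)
The plan is to rewrite the defining conditions of an ordered orthogonal array as a single system of linear equations over $\ZZ$ and then to solve it nonconstructively with a local limit theorem in the spirit of Kuperberg, Lovett and Peled. Identifying the symbols with $\ZZ_q$ and fixing the block decomposition of the $nr$ columns, assign to each $a\in\ZZ_q^{nr}$ its Niederreiter--Rosenbloom--Tsfasman weight $\mathrm{wt}(a)$, the sum over the $n$ blocks of the last-nonzero-position within each block. A short duality computation (which I would verify directly on small cases) shows that a multiset $Y$ of rows is a $t$-$(q,n,r,\lambda)$ ordered orthogonal array if and only if $\sum_{y\in Y}\chi_a(y)=0$ for every nonzero $a$ with $\mathrm{wt}(a)\le t$, where $\chi_a(v)=\exp(2\pi i\langle a,v\rangle/q)$. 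Equivalently, letting $g(v)\in\{0,1\}^{D'}$ record, over all admissible types $(t_1,\dots,t_n)$ and all target $t$-tuples $w$, whether $v$ meets the test $(\text{type},w)$, the array condition is the single equation $\sum_{y\in Y}g(y)=\lambda\mathbf 1$. The effective dimension of this problem is $D$, the number of nonzero $a$ with $\mathrm{wt}(a)\le t$, which a routine estimate of the NRT ball bounds by $(cq(n+t)/t)^{t}$; this is the quantity that will govern the final size.

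Next I would set up the probabilistic engine. Draw rows $v_1,\dots,v_M$ independently and uniformly from $\ZZ_q^{nr}$. Since any $t$ coordinates of a uniform row are jointly uniform, $\mathbb{E}[g(v)]=q^{-t}\mathbf 1$, so $S_M=\sum_i g(v_i)$ has mean exactly $\lambda\mathbf 1$ whenever $M=\lambda q^{t}$. The existence of the array is thereby reduced to the statement that $\Pr[S_M=\lambda\mathbf 1]>0$ for some admissible $M$, i.e.\ that the lattice walk $S_M$ actually attains its own (integral) mean. The Gaussian side of such a local limit statement is here automatic: in the character basis the steps are orthonormal, since $\mathbb{E}[\chi_a(v)\overline{\chi_b(v)}]=\mathbb{E}[\chi_{a-b}(v)]=\delta_{a,b}$ for $a,b\neq0$, so the covariance of the fluctuation is isotropic and non-degenerate with no eigenvalue estimate required. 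This is exactly the regime the Kuperberg--Lovett--Peled local limit theorem is built to handle.

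The substance of the argument, and the step I expect to be the main obstacle, is the \emph{arithmetic} non-degeneracy needed to pass from the isotropic Gaussian approximation to the exact hit $S_M=\lambda\mathbf 1$. Two conditions must be checked. First is a lattice/divisibility condition: one must determine the sublattice of $\ZZ^{D'}$ generated by the differences of the vectors $g(v)$, show that it has controlled index, and identify the congruence class of $\lambda$ (equivalently of $M$) for which $\lambda\mathbf 1$ lies in the correct coset, so that the target is reachable at all. Second is an aperiodicity/spread condition: the characteristic function of the step $g(v)$ must be bounded away from $1$ outside the dual lattice, uniformly and quantitatively, to rule out secondary concentration points of $S_M$. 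These are the genuinely combinatorial estimates. I would organise them through symmetry---the product of symmetric groups permuting the symbols within each column, together with the permutations of blocks and of coordinates respecting the prefix structure---which forces the step distribution to be invariant and reduces the spread bound to a finite family of explicit character sums over the NRT-weight strata. The complication specific to the ordered setting is that the nested prefix structure of the blocks breaks the full symmetry enjoyed by ordinary orthogonal arrays, so the estimates must be assembled stratum by stratum; if single uniform rows turn out to generate too small a sublattice, I would instead build $S_M$ from a group-invariant distribution on structured collections of rows, which is the standard Kuperberg--Lovett--Peled device for repairing divisibility.

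Finally I would assemble the bound. Once the two conditions above are verified with polynomial quantitative constants, the Kuperberg--Lovett--Peled theorem supplies a threshold for $M$ that is polynomial in $D$, in the (bounded) entries of $g$, and in the inverse spread, and guarantees $\Pr[S_M=\lambda\mathbf 1]>0$ for every admissible $M$ beyond it. Choosing the least admissible $M$ (a multiple of $q^{t}$) above the threshold gives $M=D^{O(1)}$, and substituting $D\le(cq(n+t)/t)^{t}$ yields a size of the form $(cq(n+t)/t)^{ct}$ for a universal constant $c$, which is~\eqref{eqn:N_upper_bound}. Combined with the Rao lower bound~\eqref{eqn:N_lower_bound}, this realises the polynomial deviation from the Rao bound promised in the introduction.
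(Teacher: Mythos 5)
Your overall strategy coincides with the paper's: both reduce the theorem to the Kuperberg--Lovett--Peled machinery applied to the span of the OOA test functions, and your dimension count is the same as the paper's (your nonzero vectors of NRT weight at most $t$ are in bijection with the paper's index set $\F'$ of partial assignments $T\to[q-1]$, so $D+1=\dim V$ and the bound $\binom{n+t-1}{t}q^t\le(eq(n+t)/t)^t$ comes out identically). The duality you invoke (the OOA condition is equivalent to $\sum_{y\in Y}\chi_a(y)=0$ for all nonzero $a$ of NRT weight at most $t$) is correct, and your observation that the characters are orthonormal correctly explains why $\dim V=D+1$. However, there is a genuine gap: the two conditions you yourself flag as ``the substance of the argument'' --- the divisibility/lattice condition and the quantitative aperiodicity (equivalently, the $\ell_1$-bounded integer basis of $V^\perp$) --- are exactly the technical core of the paper, and your plan for them (``organise them through symmetry'' and reduce to ``a finite family of explicit character sums over the NRT-weight strata'') does not by itself produce the required bounds. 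Symmetry under translation and coordinate permutations cannot determine the index of the sublattice generated by the step vectors, nor can it bound the $\ell_1$ norms of a dual basis; without an explicit construction you cannot rule out that the relevant constant is super-exponential in $t$ or depends on $q^{nr}$, which would destroy the final estimate.

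The paper closes both gaps simultaneously with one explicit object: a locally decodable dual basis. For each partial assignment $b\colon T\to[q-1]$ with $T$ in the downward closure of the prefix sets, it defines $\gamma_b$ supported on the $2^{\abs{T}}$ points $x^c$ with $c\preceq b$, with signs $(-1)^{\abs{T}-\abs{S}}$, and verifies by inclusion--exclusion that $\sum_{x\in X}\gamma_b(x)\phi_a(x)=\delta_{a,b}$. This gives three things at once: linear independence of the spanning set (hence $\dim V=\abs{\F'}$), the bound $\norm{\gamma_b}_1=2^{\abs{T}}\le 2^t$ yielding $c_3=2^{t+1}\abs{\F}$ via Lemma~\ref{lem:locally_decodable_condition}, and the identity $\ZZ^{\F'}=\{\sum_x n_x\phi(x)\}$, which shows that the step vectors generate the full integer lattice, so the divisibility constant is exactly $c_1=q^t$ and your fallback device of sampling structured collections of rows is unnecessary. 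Until you supply an argument of comparable explicitness for the lattice index and for the inverse-spread constant (with a bound of the form $2^{O(t)}\abs{\F}$), the proposal is an accurate roadmap rather than a proof.
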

\par
We note that the lower bound~\eqref{eqn:N_lower_bound} has been strengthened by Martin and Stinson~\cite{MartinStinson_Rao}, \cite{MartinStinson_AS}. However this strengthened version involves quite complicated expressions and it is unclear as to whether this strengthened version is good enough to match the upper bound~\eqref{eqn:N_upper_bound} more accurately in the case that $t$ is large compared to $n$.
\par
We shall deduce Theorem~\ref{thm:mainresult} from a landmark result by Kuperberg, Lovett, and Peled~\cite{KLP}, which can be used to establish the existence of regular combinatorial structures. Their proof is based on probabilistic arguments and is therefore nonconstructive. The theorem was applied in~\cite{KLP} to show that nontrivial $t$-designs, orthogonal arrays of strength $t$, and $t$-wise permutations exist for all $t$. The so-called KLP theorem has been proved to be powerful in various other contexts. For example, Fazeli, Lovett, and Vardy \cite{FLV} used this result to prove the existence of nontrivial $q$-analogs of $t$-designs for all~$t$.
\par
In fact it follows from~\eqref{eqn:inequalities_N} and~\cite{KLP} that
\[
N(q,n,r,t)\le \left(\frac{cqnr}{t}\right)^{ct}
\]
for some universal constant $c>0$. We shall strengthen this result in order to prove Theorem~\ref{thm:mainresult}. To do so, we first recall the KLP theorem in the next section and then give a proof of Theorem~\ref{thm:mainresult} in Section~\ref{sec:OOA}.


\section{The KLP theorem}\label{sec:KLP}

In this section we recall the main theorem of~\cite{KLP}. Let $X$ be a finite set and let $V$ be a $\QQ$-linear subspace of functions $f\colon X\rightarrow\QQ$. We are interested in subsets $Y$ of $X$ satisfying
\begin{align}\label{eq:defDesign1}
\frac{1}{\abs{Y}}\sum_{x\in Y} f(x)=\frac{1}{\abs{X}}\sum_{x\in X}  f(x)\quad\text{for all }f\in V.
\end{align}
An \emph{integer basis} of $V$ is a basis of $V$ in which all elements are integer-valued functions. Let $\{\phi_a:a\in \F\}$ be an integer basis of $V$, where $\F$ is an index set. Then a subset~$Y$ of $X$ satisfies~\eqref{eq:defDesign1} if and only if
\begin{align}\label{eq:defDesign2}
\frac{1}{\abs{Y}}\sum_{x\in Y} \phi_a(x)=\frac{1}{\abs{X}}\sum_{x\in X}  \phi_a(x)\quad\text{for all }a\in \F.
\end{align}
The KLP theorem guarantees the existence of small subsets $Y$ of $X$ with this property, once the vector space satisfies five conditions. These conditions are recalled first.

\subsection*{Conditions}

\begin{enumerate}[label=(C\arabic*), font=\bfseries, leftmargin=3em]
\item \textbf{Constant Function.}
All constant functions belong to $V$, which means that every such function can be written as a rational linear combination of the basis functions $\phi_a$ with $a\in \F$.
\item \textbf{Symmetry.}
A permutation $\pi\colon X\to X$ is called a \emph{symmetry} of $V$ if $\phi_a\circ\pi$ lies in $V$ for all $a\in \F$. The set of symmetries of $V$ forms a group called the \emph{symmetry group} of $V$. The symmetry condition requires that the symmetry group acts transitively on $X$, which means that for all $x_1,x_2\in X$, there exists a symmetry $\pi$ such that $x_1=\pi(x_2)$.
\item \textbf{Divisibility.}
There exists a positive integer $c_1$ such that, for all $a\in \F$, there exists $n\in\ZZ^X$ (with $n=(n_x)_{x\in X}$) satisfying
\[
\frac{c_1}{\abs{X}}\sum_{x\in X} \phi_a(x)=\sum_{x\in X} n_x \phi_a(x)\quad\text{for every $a\in \F$}.
\]
The smallest positive integer $c_1$ for which this identity holds is called the \emph{divisibility constant} of~$V$.
\item \textbf{Boundedness of $V$.} 
The  $\ell_\infty$-norm of a function $g\colon X\rightarrow\QQ$ is given by 
\[
\norm{g}_\infty=\max_{x\in X} \lvert g(x)\rvert.
\]
The vector space $V$ has to be bounded in the sense that there exists a positive integer $c_2$ such that $V$ has a $c_2$-bounded integer basis in $\ell_\infty$.
\item \textbf{Boundedness of $V^\perp$.} 
The $\ell_1$-norm of a function $g\colon X\rightarrow \QQ$ is given by
\[
\norm{g}_1=\sum_{x\in X} \lvert g(x)\rvert.
\]
The orthogonal complement
\[
V^\perp=\left\{g\colon X\rightarrow\QQ:\sum_{x\in X} f(x)g(x)=0\;\text{for all }f\in V\right\}
\]
of $V$ has to be bounded in the sense that $V^\perp$ has a $c_3$-bounded integer basis in $\ell_1$. \end{enumerate}
\par
We can now state the KLP theorem.
\begin{namedtheorem}{KLP theorem}[{\cite[Theorem~2.4]{KLP}}]
Let $X$ be a finite set and let $V$ be a $\QQ$-linear subspace of functions $f\colon X\rightarrow \QQ$ satisfying the conditions (C1)--(C5) with the corresponding constants $c_1,c_2,c_3$. Let $N$ be an integral multiple of~$c_1$ with
\[
\min(N, \abs{X}-N)\geq C\,c_2c_3^2 (\dim V)^6 \log(2c_3\dim V)^6,
\]
where $C>0$ is a constant. Then there exists a subset $Y$ of $X$ of size $\abs{Y}=N$ such that
\[
\frac{1}{\abs{Y}}\sum_{x\in Y} f(x)=\frac{1}{\abs{X}}\sum_{x\in X}  f(x)\quad\text{for all }f\in V.
\]
\end{namedtheorem}
\par
We close this section with recalling a useful criterion for the verification of (C5) from~\cite{KLP}. An integer basis $\{\phi_a:a\in \F\}$ of~$V$ is \emph{locally decodable} if there exist functions $\gamma_a\colon X\rightarrow\ZZ$ such that
\begin{equation}\label{eqn:localdecodability}
\sum_{x\in X}\gamma_a(x)\phi_{a'}(x)=m\delta_{a,a'}\quad\text{for all $a,a'\in \F$}
\end{equation}
for some $m\in\ZZ$, where $\delta_{a,a'}$ denotes the Kronecker $\delta$-function. Note that $\{\gamma_a:a\in\F\}$ is necessarily an integer basis of $V$. If this basis is $c_4$-bounded in $\ell_1$, then we say that $\{\phi_a:a\in \F\}$ is locally decodable \emph{with bound $c_4$}.
\begin{lem}[{\cite[Claim~3.2]{KLP}}]
\label{lem:locally_decodable_condition}
Suppose that $\{\phi_a:a\in \F\}$ is a $c_2$-bounded integer basis in~$\ell_\infty$ of~$V$ that is locally decodable with bound $c_4$. Then $V^\perp$ has a $c_3$-bounded integer basis in $\ell_1$ with $c_3=2c_2c_4\abs{\F}$.
\end{lem}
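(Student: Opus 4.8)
The plan is to manufacture an explicit integer-valued spanning family of $V^\perp$ out of the local-decoding data and then thin it to a basis, the whole point being to choose the projection so that the $\ell_1$-norms stay small. Write $\langle f,g\rangle=\sum_{x\in X}f(x)g(x)$ for the standard bilinear form, and let $m\in\ZZ$ and $\gamma_a\colon X\to\ZZ$ be as in the definition of local decodability, so that $\langle\gamma_a,\phi_{a'}\rangle=m\,\delta_{a,a'}$ for all $a,a'\in\F$. The remark after the definition records that $\{\gamma_a:a\in\F\}$ is an integer basis of $V$; in particular the $\gamma_a$ are linearly independent, and $m\neq0$ since otherwise each $\gamma_a$ would lie in $V\cap V^\perp=\{0\}$, the standard form being positive definite over $\QQ$. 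The decisive step is the choice of projection: I would set
\[
\rho(g)=\frac{1}{m}\sum_{a\in\F}\langle g,\phi_a\rangle\,\gamma_a ,
\]
letting the $\ell_\infty$-bounded basis $\{\phi_a\}$ furnish the coefficients while the $\ell_1$-bounded (hence sparse) basis $\{\gamma_a\}$ carries the output.

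Next I would verify that $\rho$ is the projection onto $\operatorname{span}\{\gamma_a\}$ with kernel exactly $V^\perp$. The decoding relation gives $\langle\rho(g),\phi_{a'}\rangle=\langle g,\phi_{a'}\rangle$, whence $\rho\circ\rho=\rho$, and $\rho(g)=0$ holds precisely when $\langle g,\phi_a\rangle=0$ for every $a$, i.e.\ when $g\in V^\perp$. Hence $I-\rho$ is a projection with image $\ker\rho=V^\perp$. Applying it to the indicator functions $e_x$ (where $e_x(y)=\delta_{x,y}$), which span $\QQ^X$, yields a spanning family of $V^\perp$; clearing denominators I set
\[
\psi_x=m\,(I-\rho)(e_x)=m\,e_x-\sum_{a\in\F}\phi_a(x)\,\gamma_a .
\]
Each $\psi_x$ is integer-valued because $m$, the $\phi_a(x)$ and the $\gamma_a$ all are, and $\{\psi_x:x\in X\}$ still spans $V^\perp$.

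It then remains to bound the norms and extract a basis. From $\abs{m}=\abs{\langle\gamma_a,\phi_a\rangle}\le c_2\norm{\gamma_a}_1\le c_2c_4$, the bound $\abs{\phi_a(x)}\le c_2$, and $\norm{\gamma_a}_1\le c_4$, the triangle inequality gives
\[
\norm{\psi_x}_1\le\abs{m}+\sum_{a\in\F}\abs{\phi_a(x)}\,\norm{\gamma_a}_1\le c_2c_4+\abs{\F}\,c_2c_4\le 2\abs{\F}\,c_2c_4=c_3 .
\]
Since the integer-valued $\psi_x$ span $V^\perp$, any maximal $\QQ$-linearly independent subfamily is a basis of $V^\perp$ consisting of integer-valued functions, each obeying the displayed bound; this is the desired $c_3$-bounded integer basis in $\ell_1$.

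I expect the one genuine obstacle to be spotting the correct projection. The symmetric-looking alternative $g\mapsto\frac1m\sum_a\langle g,\gamma_a\rangle\phi_a$ is equally a projection onto $V$ with kernel $V^\perp$, but the vectors it produces involve $\norm{\phi_a}_1$, which the hypotheses control only by $c_2\abs{X}$; that stray factor of $\abs{X}$ would wreck the bound. Interchanging the roles of the two dual bases, so that the sparse family $\{\gamma_a\}$ appears in the output and the uniformly bounded values $\phi_a(x)$ appear as scalars, is exactly what eliminates the dependence on $\abs{X}$ and delivers $c_3=2c_2c_4\abs{\F}$.
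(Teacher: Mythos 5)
Your proof is correct. The paper itself gives no proof of this lemma---it is quoted verbatim from \cite[Claim~3.2]{KLP}---but your argument is essentially the one in that reference: the functions $\psi_x = m\,e_x-\sum_{a\in\F}\phi_a(x)\,\gamma_a$ (there obtained directly by checking $\langle\psi_x,\phi_{a'}\rangle=0$ rather than via your projection $I-\rho$, which amounts to the same computation) span $V^\perp$, are integer-valued, and satisfy $\norm{\psi_x}_1\le \abs{m}+\abs{\F}c_2c_4\le 2c_2c_4\abs{\F}$, after which one extracts a maximal linearly independent subfamily.
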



\section{Proof of Theorem~\ref{thm:mainresult}}\label{sec:OOA}

In this section we prove Theorem~\ref{thm:mainresult} using the KLP theorem. Not surprisingly, our proof proceeds along similar lines as the proof given in \cite{KLP} for orthogonal arrays. We start by defining an ordered orthogonal array in the framework of the KLP theorem and specifying the underlying vector space~$V$. We then show that $V$ satisfies the conditions (C1)--(C5) with suitable constants, which establishes the existence of sufficiently small ordered orthogonal arrays.
\par
Henceforth we  denote by $[m]$ the set $\{1,2,\dots,m\}$. Next we define the set~$X$, the index set $\F$, and the vector space~$V$. Let $q,n,r,t$ be integers satisfying $n,r\geq 1$, $q\geq 2$, and $1\leq t\leq nr$.  Let $X$ be the set of all functions $[nr]\to [q]$.  We partition $[nr]$ into $n$ blocks of size $r$ containing subsequent numbers and let $\S$ be the family of $t$-subsets of $[nr]$ containing $t_i$ subsequent numbers from the $i$-th block, where $t_1,t_2,\dots,t_r\in\{0,1,\dots,r\}$ are integers summing up to $t$. Let $\F$ be the set of functions $S\to [q]$ with $S\in\S$ and, for $a\in\F$ with $a\colon S\to[q]$, define $\phi_a\colon X\to\QQ$ by
\[
\phi_a(x)=\begin{cases}
1 & \text{if $a(i)=x(i)$ for all $i\in S$,}\\
0 & \text{otherwise}.
\end{cases}
\]
Finally, let $V$ be the $\QQ$-span of~$\{\phi_a:a\in \F\}$. Now a subset~$Y$ of~$X$ is a $t$-$(q,n,r,\lambda)$ ordered orthogonal array if and only if~\eqref{eq:defDesign2} holds. Note that
\[
\frac{|Y|}{|X|}\sum_{x\in X} \phi_a(x)=\frac{|Y|}{q^t}=\lambda.
\]
In what follows we shall show that $V$ satisfies the conditions (C1)--(C5) with suitable constants and then deduce Theorem~\ref{thm:mainresult} from the KLP theorem.

\subsection*{(C1) Constant Function} 
For each $x\in X$, the sum
\begin{align}\label{eq:constfunc}
\sum_{a\in \F}\phi_a(x)=|\{a\in \F : a(i)=x(i)\text{ for all }i\in S\}|
\end{align}
is the cardinality of $\S$ since the image of $a$ is fixed by the image of~$x$. This implies 
\[
\frac{1}{|\S|}\sum_{a\in \F}\phi_a(x)=1
\]
for each $x\in X$ and hence $V$ contains the constant function.

\subsection*{(C2) Symmetry}
For each $x\in X$, define the permutation $\pi_x\colon X\to X$ by
\[
\pi_x(b)=x+b,
\]
where $x+b$ is the mapping in $X$ that satisfies $(x+b)(i)\equiv x(i)+b(i)\pmod q$ for all $i\in[nr]$. Then $\{\pi_x:x\in X\}$ is a group that acts transitively on $X$. We now show that this group is a subgroup of the symmetry group of $V$, which shows that the symmetry condition is satisfied. For each $b\in X$ and each $a\in \F$ with $a\colon S\to[q]$, we have
\[
\left(\phi_a\circ \pi_x\right)(b)=\phi_a(x+b)=\phi_{a'}(b),
\]
where $a'\in\F$ is the mapping $S\to[q]$ that satisfies $a'(i)\equiv a(i)-x(i)\pmod q$ for all $i\in S$. Therefore the function $\phi_a\circ \pi_x$ lies in~$V$, as required.

\subsection*{(C4) Boundedness of $V$} 
The set $\{\phi_a:a\in \F\}$ spans $V$ and consists of integer-valued functions that are $1$-bounded in $\ell_\infty$. Hence there exists a $c_2$-bounded integer basis of~$V$  with $c_2=1$.

\subsection*{(C5) Boundedness of $V^\perp$} 

We shall show that $V$ has a locally decodable integer basis with bound $2^t$. Lemma~\ref{lem:locally_decodable_condition} then implies that $V^\perp$ has a $c_3$-bounded integer basis in $\ell_1$ for $c_3=2^{t+1}\abs{\F}$.
\par
Recall that $a\in \F$ is a function $S\to[q]$ for some $t$-set $S\in\S$. Instead of taking the whole set $S$ as the domain of $a$, we now allow subsets of $S$. Moreover these subsets are now only mapped to $[q-1]$ instead of $[q]$. More formally, define
\[
\S'=\bigcup_{S\in\S}\bigcup_{T\subseteq S} T\qandq \F'=\{T\to [q-1]\colon T\in \S'\}.
\]
Note that, for each $b\in \F'$, there exists $a\in \F$ that coincides with $b$ if the domain of $a$ is restricted to that of $b$.
\par
First we will show that $V$ is spanned by $\{\phi_b:b\in \F'\}$.
\begin{lem}\label{lem:spanningsetV}
The set $\{\phi_b:b\in \F'\}$ spans $V$.
\end{lem}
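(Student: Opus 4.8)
The plan is to prove the two inclusions $\operatorname{span}\{\phi_b:b\in\F'\}\subseteq V$ and $V\subseteq\operatorname{span}\{\phi_b:b\in\F'\}$ separately. A useful preliminary observation is that $\emptyset\in\S'$ (since $\emptyset\subseteq S$ for every $S\in\S$), and the corresponding empty function $b\colon\emptyset\to[q-1]$ gives $\phi_b\equiv 1$; thus the constant function occurs among the $\phi_b$ and can be used freely below.

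For the inclusion $\operatorname{span}\{\phi_b:b\in\F'\}\subseteq V$ I would show directly that $\phi_b\in V$ for each $b\in\F'$. Fix $b\colon T\to[q-1]$ with $T\in\S'$ and choose $S\in\S$ with $T\subseteq S$, which exists by the definition of $\S'$. The claim is that
\[
\phi_b=\sum_{\substack{a\colon S\to[q]\\ a|_T=b}}\phi_a,
\]
where the sum ranges over all extensions $a$ of $b$ to $S$; each such $a$ lies in $\F$. Indeed, for fixed $x\in X$ there is exactly one extension $a$ with $\phi_a(x)=1$ when $\phi_b(x)=1$, namely $a=x|_S$, and there is none when $\phi_b(x)=0$; summing the free coordinates over all of $[q]$ therefore contributes a factor $1$. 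Hence $\phi_b\in V$.

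For the reverse inclusion it suffices to express each generator $\phi_a$ with $a\in\F$, say $a\colon S\to[q]$, in terms of the $\phi_b$. For $i\in S$ and $v\in[q]$ write $\psi_{i,v}(x)=1$ if $x(i)=v$ and $\psi_{i,v}(x)=0$ otherwise, so that
\[
\phi_a=\prod_{i\in S}\psi_{i,a(i)}.
\]
For $v\in[q-1]$ the factor $\psi_{i,v}$ is itself one of the generators (it equals $\phi_b$ for $b\colon\{i\}\to[q-1]$, $i\mapsto v$), so the only obstruction is the distinguished symbol $q$. The heart of the argument is the inclusion–exclusion substitution $\psi_{i,q}=1-\sum_{v=1}^{q-1}\psi_{i,v}$: replacing each factor $\psi_{i,q}$ (those $i$ with $a(i)=q$) by this expression and multiplying out yields a $\ZZ$-linear combination of products $\prod_{i\in T}\psi_{i,v_i}$ with $T\subseteq S$ and $v_i\in[q-1]$. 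Each such product equals $\phi_b$ for $b\colon T\to[q-1]$, $i\mapsto v_i$, which lies in $\F'$ because $T\subseteq S\in\S$ (the empty product being the constant function $\phi_\emptyset$). Hence $\phi_a\in\operatorname{span}\{\phi_b:b\in\F'\}$.

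I do not expect a serious obstacle here. The only nontrivial point is the substitution for the symbol $q$; once it is in place, the remaining work is the bookkeeping that every resulting monomial has domain contained in $S$ and values in $[q-1]$, so that it is genuinely indexed by an element of $\F'$. The first inclusion is comparatively routine, amounting to summing a single free coordinate over all $q$ symbols.
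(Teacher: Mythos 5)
Your proof is correct and takes essentially the same route as the paper: the first inclusion uses the identical extension-summation identity $\phi_b=\sum_{a\in M}\phi_a$, and the second rests on the same substitution $1_{x(i)=q}=1-\sum_{v=1}^{q-1}1_{x(i)=v}$, which the paper applies one coordinate at a time via induction on the number of coordinates mapped to $q$ (through the identity $\phi_a=\phi_{a'}-\sum_{k=1}^{q-1}\phi_{a^k}$), whereas you expand the product all at once. The bookkeeping you flag --- that every resulting monomial has domain a subset of $S$ and values in $[q-1]$, hence is indexed by $\F'$ --- is exactly what the paper's induction hypothesis tracks, so both presentations are sound.
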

\begin{proof}
We first show that every function~$\phi_b$ with $b\in \F'$ lies in $V$. To do so, let $b\in \F'$ with $b\colon T\to [q-1]$ for some $T\in \S'$ and choose some $S\in\S$ such that $T\subseteq S$. Consider the set $M$ of all mappings $a\colon S\to[q]$ that coincide with $b$ when their domains are restricted to $T$. Then, for every $x\in X$ with $\phi_b(x)=1$, there is exactly one element $a\in M$ with $\phi_a(x)=1$. Moreover, if $\phi_b(x)=0$, then $\phi_a(x)=0$ for all $a\in M$. Hence we have
\[
\phi_b=\sum_{a\in M} \phi_a,
\]
which belongs to $V$, as required.
\par
Now choose $T\in \S'$ and $a\colon T\to[q]$ and note that $a\in\F$ if $\abs{T}=t$. We show that $\phi_a$ is in the span of $\{\phi_b:b\in \F'\}$. We proceed with an induction on the number $c$ of elements in $[nr]$ mapped to $q$ under $a$, with the base case being $c=0$. Suppose now that $c$ is nonzero. Then there exists $i_0\in T$ with $a(i_0)=q$. For each $k\in[q-1]$, define $a^k\colon T \to[q]$ by
\[ 
a^k(i)=\begin{cases}
a(i)&\text{for }i\neq i_0\\
k&\text{for } i=i_0
\end{cases}
\]
and let $a'$ be the mapping $a$ restricted to $T\setminus\{i_0\}$. Then we have
\[
\phi_a=\phi_{a'}-\sum_{k=1}^{q-1}\phi_{a^k}.
\]
By the induction hypothesis, the right-hand side is in the span of $\{\phi_b:b\in \F'\}$, which completes the proof.
\end{proof}
\par
For the boundedness of $V^\perp$, it remains to prove that $\{\phi_b:b\in \F'\}$ is locally decodable, from which we can also deduce that this set is linearly independent. For $x\in X$, let $\phi(x)$ be the element of $\ZZ^{\F'}$ with entries $\phi_b(x)$. We will also show that the lattice spanned by the vectors $\{\phi(x):x\in X\}$ equals $\ZZ^ {\F'}$. This property will be helpful later to determine the divisibility constant of $V$.
\begin{lem}\label{lem:locdecbasis}
The set $\{\phi_b:b\in \F'\}$ is a locally decodable basis for $V$ with bound $2^t$. Moreover we have
\begin{align}\label{eq:ZZwdAequalsLattice}
\ZZ^{ \F'}=\left\{\sum_{x\in X} n_x \phi(x): n_x\in\ZZ \right\}.
\end{align}
\end{lem}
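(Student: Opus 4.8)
The plan is to build an explicit dual family $\{\gamma_b:b\in\F'\}$ coordinate by coordinate, exploiting the product structure $X=[q]^{[nr]}$. For $w\in[q]$ write $e_v$ for the indicator on $[q]$ with $e_v(w)=1$ if $w=v$ and $e_v(w)=0$ otherwise, so that for $b\colon T\to[q-1]$ we have $\phi_b(x)=\prod_{i\in T}e_{b(i)}(x(i))$, where each coordinate $i\notin T$ contributes the constant factor $1$. Equipping the single-coordinate space of functions $[q]\to\QQ$ with the pairing $\langle f,g\rangle=\sum_{w\in[q]}f(w)g(w)$, the system $\{1,e_1,\dots,e_{q-1}\}$ has the integer dual basis consisting of $e_q$ (dual to the constant $1$) and $e_u-e_q$ for $u\in[q-1]$ (dual to $e_u$). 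The point is that the value $q$, deliberately excluded from the range $[q-1]$, acts as a sink that lets a dual vector detect whether a coordinate lies in the domain. Accordingly I would set
\[
\gamma_b(x)=\prod_{i\in T}\bigl(e_{b(i)}-e_q\bigr)(x(i))\cdot\prod_{i\notin T}e_q(x(i)),
\]
which is an integer-valued function on $X$.

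Next I would verify the local-decodability identity $\sum_{x\in X}\gamma_b(x)\phi_{b'}(x)=\delta_{b,b'}$ by factoring the sum over $X$ into a product $\prod_{i\in[nr]}\langle g_i,p_i\rangle$ of single-coordinate pairings, where $g_i$ and $p_i$ are the $i$-th factors of $\gamma_b$ and $\phi_{b'}$. If $b=b'$ every factor equals $1$, giving $m=1$ in~\eqref{eqn:localdecodability}. If $b\neq b'$, with $b\colon T\to[q-1]$ and $b'\colon T'\to[q-1]$, I would check the three ways they can differ: a coordinate $i\in T\cap T'$ with $b(i)\neq b'(i)$ annihilates the product via $\langle e_{b(i)}-e_q,e_{b'(i)}\rangle=0$; a coordinate $i\in T\setminus T'$ does so via $\langle e_{b(i)}-e_q,1\rangle=0$; and a coordinate $i\in T'\setminus T$ does so via $\langle e_q,e_{b'(i)}\rangle=0$, the last using $b'(i)\in[q-1]$. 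The $\ell_1$-norm factorizes the same way,
\[
\norm{\gamma_b}_1=\prod_{i\in T}\norm{e_{b(i)}-e_q}_1\cdot\prod_{i\notin T}\norm{e_q}_1=2^{\abs T}\le 2^t,
\]
so $\{\phi_b:b\in\F'\}$ is locally decodable with bound $c_4=2^t$. Applying $\gamma_{b'}$ to any vanishing linear combination of the $\phi_b$ forces every coefficient to vanish, so this set is linearly independent; combined with Lemma~\ref{lem:spanningsetV} it is a basis of $V$.

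The lattice identity~\eqref{eq:ZZwdAequalsLattice} then falls out of the same computation read in the opposite direction: the vector $\sum_{x\in X}\gamma_b(x)\,\phi(x)\in\ZZ^{\F'}$ has $b'$-entry $\sum_{x\in X}\gamma_b(x)\phi_{b'}(x)=\delta_{b,b'}$, hence equals the standard basis vector of $\ZZ^{\F'}$ indexed by $b$. Thus every standard basis vector is an integer combination of the $\phi(x)$, so the lattice generated by $\{\phi(x):x\in X\}$ contains $\ZZ^{\F'}$; the reverse inclusion is clear since each $\phi(x)$ is a $0$-$1$ vector, and~\eqref{eq:ZZwdAequalsLattice} follows.

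I expect the main obstacle to be identifying the correct dual functions---specifically, recognizing that restricting the range to $[q-1]$ frees up the symbol $q$ to serve as the sink distinguishing $\phi_b$ from $\phi_{b'}$ when their domains $T$ and $T'$ differ in size. Once this single-coordinate dual basis is in hand, the tensor structure renders both the orthogonality relations and the $\ell_1$ bound routine.
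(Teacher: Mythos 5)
Your proposal is correct and matches the paper's proof: your product formula for $\gamma_b$ unpacks to exactly the paper's dual functions, namely the signed indicator taking the value $(-1)^{|T|-|S|}$ on the points $x^c$ with $c\colon S\to[q-1]$ and $c\preceq b$, and the lattice argument at the end is identical. The only cosmetic difference is that you verify the orthogonality relations and the $\ell_1$ bound by factoring coordinate-by-coordinate over $[nr]$, whereas the paper evaluates the same sum as an alternating binomial sum over the subset lattice of $T$.
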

\begin{proof}
For $a\colon R\to [q-1]$ and $b\colon T\to [q-1]$ in $\F'$ write $a\preceq b$ if $R\subseteq T$ and $a(i)=b(i)$ for all $i\in R$. This defines a partial order on $\F'$.
\par
We extend each mapping $b\colon T\to [q-1]$ in $\F'$ to a mapping $x^b\colon [nr]\to[q]$ in $X$ via
\[
x^b(i)=\begin{cases}
b(i)&\text{for $i\in T$},\\
q&\text{otherwise}.
\end{cases}
\]
For $a,b\in\F'$, we then have $\phi_{a}(x^b)=1_{a\preceq b}$, where~$1_A$ is the indicator of an event $A$. For each $b\colon T\to[q-1]$ in $\F'$, we define $\gamma_b\colon X\to\ZZ$ by
\[
\gamma_b(x)=\begin{cases}
(-1)^{|T|-|S|}&\text{if $x=x^c$ for some $c\colon S\to[q-1]$ in $\F'$ with $c\preceq b$},\\
0&\text{otherwise}.
\end{cases}
\]
\par
Next we  show that the mappings $\gamma_b$ satisfy~\eqref{eqn:localdecodability} with $m=1$. Note that each $x\in X$ with $\gamma_b(x)\neq 0$ corresponds to exactly one $c\in \F'$ with $c\preceq b$. Hence, for all $a,b\in\F$, we have
\begin{align*}
\sum_{x\in X}\gamma_b(x)\phi_{a}(x)
&=\sum_{c\preceq b} \gamma_b(x^c)\phi_{a}(x^c)\\
&=\sum_{c\preceq b} \gamma_b(x^c) 1_{a\preceq c}\\
&=1_{a\preceq b}\sum_{a\preceq c\preceq b}\gamma_b(x^c).
\end{align*}
Let $a,c,b$ have domains $R,S,T$. Then the summand in the latter sum equals $(-1)^{\abs{T}-\abs{S}}$ and the condition $a\preceq c\preceq b$ means that $R\subseteq S\subseteq T$ and the image of  $S$ under~$c$ is fixed by the image of $S$ under $b$. Hence the mappings $c\in\F'$ satisfying $a\preceq c\preceq b$ are in one-to-one correspondence with sets~$S$ satisfying $R\subseteq S\subseteq T$. There are exactly $\binom{|T|-|R|}{k}$ ways to choose such a subset~$S$ with $\abs{T}-k$ elements and therefore we have
\begin{align}
\sum_{x\in X}\gamma_b(x)\phi_{a}(x)&=1_{a\preceq b}\sum_{k=0}^{|T|-|R|}(-1)^k\binom{|T|-|R|}{k}\notag\\
&=1_{a\preceq b}\cdot 1_{|T|=|R|}\notag\\
&=\delta_{a,b}.   \label{eq:locdec-2}
\end{align}
This establishes~\eqref{eqn:localdecodability} for $m=1$. Let $\phi$ and $\gamma$ be the $X\times \F'$ matrices with $\phi_{x,b}=\phi_b(x)$ and $\gamma_{x,b}=\gamma_b(x)$, respectively. Then~\eqref{eq:locdec-2} implies that $\gamma^T\phi$ is the identity matrix and therefore~$\phi$ has full rank. Together with Lemma~\ref{lem:spanningsetV} it follows that $\{\phi_b:b\in \F'\}$ is a basis for $V$. Since~\eqref{eqn:localdecodability} holds, this basis is locally decodable.
\par
To obtain a bound for the local decodability, note that for each $b\in\F'$, we have
\begin{align*}
\norm{\gamma_b}_1=\sum_{x\in X}\big\lvert\gamma_b(x)\big\rvert=|\{x\in X : \,x=x^c\text{ for some }c\preceq b\}|.
\end{align*}
If $T$ is the domain of $b$, then this number is just the number of subsets of~$T$, namely $2^{\abs{T}}$. Since $\abs{T}\le t$, this shows that $\{\phi_b:b\in\F'\}$ is a locally decodable basis for $V$ with bound $2^t$. 
\par
To prove the second statement of the lemma, note that $\ZZ^{ \F'}$ is equipped with the standard basis $\{e^b:b\in  \F'\}$, where $e_{a}^b=\delta_{a,b}$ for all $a,b\in\F'$. From~\eqref{eq:locdec-2} we have
\[
\sum_{x\in X}\gamma_b(x)\phi(x)=e^b,
\]
which establishes the second statement of the lemma.
\end{proof}
\par
Now note that $V$ has a $c_2$-bounded integer basis in $\ell_\infty$ with $c_2=1$ and $\abs{\F'}\le\abs{\F}$, which follows since $V$ has a basis of size $\F'$ and a spanning set of size $\F$. Hence Lemmas~\ref{lem:locally_decodable_condition} and~\ref{lem:locdecbasis} imply that $V^\perp$ has a $c_3$-bounded integer basis in $\ell_1$ with $c_3=2^{t+1}\abs{\F}$.

\subsection*{(C3) Divisibility} 
For every $b\colon T\to [q-1]$ in $\F'$, we have
\begin{align*}
\frac{1}{\abs{X}}\sum_{x\in X}  \phi_b(x)
&=\frac{1}{\abs{X}}\,|\{x\in X : \text{$x(i)=b(i)$ for all $i\in T$}\}|\\
&=\frac{q^{nr-|T|}}{\abs{X}}=\frac{1}{q^{|T|}}.
\end{align*}
Since $|T|\leq t$, the number $q^t/\abs{X}\sum_{x\in X} \phi_b(x)$ is an integer. From~\eqref{eq:ZZwdAequalsLattice} we conclude that $V$ satisfies the divisibility condition and that the divisibility constant of $V$ is $c_1=q^t$.

\subsection*{Proof of Theorem~\ref{thm:mainresult}}
We have verified the conditions of the KLP theorem with the parameters
\[
c_1=q^t,\quad c_2=1,\quad c_3=2^{t+1}\abs{\F}.
\]
Moreover we have $\dim(V)\le \abs{\F}$ and $\abs{\F}=q^t\abs{\S}$, where
\[
|\S|=|\{(t_1,\dots,t_n):t_i\in\{0,1,\dots,r\},\,\sum_{i=1}^n t_i=t\}|.
\]
This is the number of $r$-restricted partitions of $t$ with at most $n$ parts, which is upper bounded by the number of non-restricted partitions of $t$ with at most~$n$ parts. It is well known and readily verified that this number equals $\binom{n+t-1}{t}$ and hence, by using the standard bound $\binom{m}{k}\le\left(\frac{em}{k}\right)^k$, we obtain
\[
|\S|\leq\binom{n+t-1}{t}\leq\left(\frac{e(n+t)}{t}\right)^t.
\]
The KLP theorem now implies the existence of an ordered orthogonal array~$Y$ of strength $t$ satisfying $|Y|\leq \big(\frac{cq(n+t)}{t}\big)^{ct}$ for some universal constant $c>0$. This proves Theorem~\ref{thm:mainresult}.\hfill\qed



\begin{thebibliography}{10}
	
	\bibitem{Castoldi}
	A.~G. Castoldi, L.~Moura, D.~Panario, and B.~Stevens, \emph{Ordered orthogonal
		array construction using {LFSR} sequences}, IEEE Trans. Inform. Theory
	\textbf{63} (2017), no.~2, 1336--1347.
	
	\bibitem{FLV}
	A.~Fazeli, S.~Lovett, and A.~Vardy, \emph{Nontrivial {$t$}-designs over finite
		fields exist for all~{$t$}}, J. Combin. Theory Ser. A \textbf{127} (2014),
	149--160.
	
	\bibitem{SloaneOA}
	A.~S. Hedayat, N.~J.~A. Sloane, and J.~Stufken, \emph{Orthogonal arrays},
	Springer Series in Statistics, Springer-Verlag, New York, 1999.
	
	\bibitem{KLP}
	G.~Kuperberg, S.~Lovett, and R.~Peled, \emph{Probabilistic existence of regular
		combinatorial structures}, Geom. Funct. Anal. \textbf{27} (2017), no.~4,
	919--972.
	
	\bibitem{LawrenceOOA}
	K.~M. Lawrence, \emph{A combinatorial characterization of {$(t,m,s)$}-nets in
		base {$b$}}, J. Combin. Des. \textbf{4} (1996), no.~4, 275--293.
	
	\bibitem{MartinStinson_AS}
	W.~J. Martin and D.~R. Stinson, \emph{Association schemes for ordered
		orthogonal arrays and {$(T,M,S)$}-nets}, Canad. J. Math. \textbf{51} (1999),
	no.~2, 326--346.
	
	\bibitem{MartinStinson_Rao}
	\bysame, \emph{A generalized {R}ao bound for ordered orthogonal arrays and
		{$(t,m,s)$}-nets}, Canad. Math. Bull. \textbf{42} (1999), no.~3, 359--370.
	
	\bibitem{MullenSchmidOOA}
	G.~L. Mullen and W.~Ch. Schmid, \emph{An equivalence between {$(t,m,s)$}-nets
		and strongly orthogonal hypercubes}, J. Combin. Theory Ser. A \textbf{76}
	(1996), no.~1, 164--174.
	
	\bibitem{Panario}
	D.~Panario, M.~Saaltink, B.~Stevens, and D.~Wevrick, \emph{A general
		construction of ordered orthogonal arrays using {LFSR}s}, IEEE Trans. Inform.
	Theory \textbf{65} (2019), no.~7, 4316--4326.
	
	\bibitem{Rao}
	C.~R. Rao, \emph{Some combinatorial problems of arrays and applications to
		design of experiments}, A Survey of Combinatorial Theory (J.~N. Srivastava,
	ed.), North-Holland, 1973, pp.~349--359.
	
	\bibitem{RosenbloomTsfasman}
	M.~Y. Rosenbloom and M.~A. Tsfasman, \emph{Codes for the {$m$}-metric},
	Problems Inform. Transmission \textbf{33} (1997), no.~1, 45--52.
	
	\bibitem{Skriganov}
	M.~M. Skriganov, \emph{Coding theory and uniform distributions}, Algebra i
	Analiz \textbf{13} (2001), no.~2, 191--239.
	
\end{thebibliography}

\providecommand{\bysame}{\leavevmode\hbox to3em{\hrulefill}\thinspace}
\providecommand{\MR}{\relax\ifhmode\unskip\space\fi MR }
\providecommand{\MRhref}[2]{%
	\href{http://www.ams.org/mathscinet-getitem?mr=#1}{#2}
}
\providecommand{\href}[2]{#2}

\end{document}